\documentclass[11pt]{article}
\usepackage{latexsym,euscript,amsmath,amssymb,amsbsy,amsfonts,amsthm,amsopn,amstext,amsxtra,amscd}
\usepackage{epsfig}
\usepackage{graphics}
\usepackage{url}
\usepackage[T1]{fontenc}
\allowdisplaybreaks
\theoremstyle{plain}
\newtheorem{theorem}{Theorem}

\newtheorem{corollary}{Corollary}

\theoremstyle{definition}

\newtheorem{remark}{Remark}
\makeatletter\def\blfootnote{\xdef\@thefnmark{}\@footnotetext}\makeatother
\def\dd{\text{\rm d}}
\begin{document}

\title{An extremal problem in uniform distribution theory\\
{\small Dedicated to Harald Niederreiter on the occasion of his 70th birthday}}

\author{V. Bal\'a\v z, M.R. Iac\`o, O. Strauch, S. Thonhauser,  R.F. Tichy \thanks{The authors were
supported by the bilateral Austria-Slovakia travel grant \textquotedblleft Uniform distribution, copulas
and applications\textquotedblright. The second, forth and fifth authors
are supported by the Austrian Science Fund (FWF) Project F5510 (part of the Special Research
Program (SFB) \textquotedblleft Quasi-Monte Carlo Methods: Theory and Applications\textquotedblright).
The second author is also partially supported by the Austrian Science Fund (FWF): W1230,
Doctoral Program \textquotedblleft Discrete Mathematics\textquotedblright.
The first and third authors are supported by VEGA Project 2/0149/14.}}
\date{}
\maketitle
\blfootnote{2010 {\it Mathematical Subject Classification}. 11K06, 60E05, 60A10, 49K27.}
\blfootnote{{\bf Keywords}: Uniform distribution, copula, Monge-Kantorovich problem, dual problem, $c$-convex function, $c$-subdifferential.}

\begin{abstract}
In this paper we consider an optimization problem for Ces\`aro means of bivariate functions. We apply
methods from uniform distribution theory, calculus of variations and ideas from the theory of optimal transport.
\end{abstract}
\section{Introduction}
In a series of papers J.G. van der Corput \cite{van_der_Corput_1935, van_der_Corput_1936} systematically
investigated distribution functions of sequences of real numbers. More recently, the study of distribution
functions was extended to multivariate functions by the Slovak school of O. Strauch and his
coworkers; see \cite{Strauch_school_III, Strauch_school_IV, Strauch_school_II, Strauch_school_I}.
These investigations include the study of the set of all distribution functions of a given sequence and various optimization problems.\\
A particularly interesting problem is the study of extremal limits of two-dimensional sequences of the form
\begin{equation}\label{inte}
\frac{1}{N} \sum_{n = 1}^N F(x_n, y_n)\ ,\qquad N=1,2,\dots
\end{equation}
where $(x_n)_{n\in\mathbb{N}}, (y_n)_{n\in\mathbb{N}}$ are uniformly distributed (for short u.d.) sequences in
the unit interval and $F$ is a given continuous function on $[0,1]^2$; see \cite{PS}.\\

Let us recall that a sequence $(x_n)_{n\in\mathbb{N}}$ of points in $[0,1[$ is said to be u.d.\ if and only if
\begin{equation*}
\lim_{N \rightarrow \infty} \frac{1}{N} \sum_{n = 1}^N \mathbf{1}_{[a, b[} (x_n) = b-a
\end{equation*}
for all intervals $[a, b[ \subseteq [0,1[$, where $\mathbf{1}_E$ denotes as usual the indicator function of
the set $E$. We refer to \cite{DrmotaTichy1997, KN, StrauchPorubsky} as general references on the subject.\\
A mapping $T$ of the unit interval into itself is called uniform distribution preserving (for short u.d.p.)
if the sequence $(T(x_n ))_{n\in\mathbb{N}}$ is u.d.\  whenever $(x_n)_{n\in\mathbb{N}}$ is a u.d.\ sequence in $[0,1[$.\\
These maps have been extensively studied (see e.g. \cite{Bosch, PSS1988}), also in connection to variational problems \cite{Steinerberger}
and extended to compact metric spaces \cite{TichyWinkler}. They are particularly interesting for the purposes of this paper since
they can be thought of as suitable measure preserving rearrangements of the unit interval, as we will see in the next section.
\\
It turned out that the study of the asymptotic behaviour of mean values \eqref{inte} is equivalent to find optimal bounds
for Riemann-Stieltjes integrals of the form
\begin{equation}
\label{1}
\int_0^1\int_0^1F(x,y)dC(x,y),
\end{equation}
where $C$ is the asymptotic distribution function of the sequence $(x_n, y_n)_{n\in\mathbb{N}}$ and is usually referred to as
copula (see \cite{FialovaStrauch}). More precisely, a 2-copula is a function $C\colon [0,1]^2\rightarrow [0,1]$ satisfying the
following properties: for every $x,y \in [0,1]$
\begin{align*}
C(x,0) &= C(0,y) = 0,\\
C(x,1) &= x\;\text{and}\; C(1,y) = y,
\end{align*}
and for every $x_1,x_2,y_1,y_2 \in [0,1]$ with $x_2 \geq x_1$ and $y_2 \geq y_1$
\begin{equation*}
 C(x_2,y_2) - C(x_2, y_1) - C(x_1, y_2) + C(x_1,y_1) \geq 0.
\end{equation*}
An important property of copulas, which makes the expression in (\ref{1}) meaningful and which can be derived from
the above properties, is that every copula $C$ induces a doubly stochastic measure $\gamma_C$
(later on denoted by $\gamma$ when there is no possibility of confusion) on the measurable space $([0,1]^2, \mathcal{B})$, via the formula
\begin{equation*}
\gamma([a,b]\times[c,d])=C(b,d)-C(b,c)-C(a,d)+C(a,c).
\end{equation*}
Moreover,
there is a one-to-one correspondence between copulas and doubly stochastic measures. For every copula $C$,
the measure $\gamma$ is doubly stochastic in the sense that for every Borel set
$B\subset [0,1]$,  $\gamma([0, 1] \times B) = \gamma (B \times [0, 1])= \lambda(B)$ where $\lambda$ is the Lebesgue measure on
$[0,1]$. Conversely, for every doubly stochastic measure $\mu$, there exists a copula $C$ given by
$C(u, v)=\mu(([0, u])\times([0, v]))$. Clearly, a probability measure on $([0,1]^2,\mathcal{B})$
with uniform marginals is doubly stochastic. We refer to \cite{Durante_Sempi, Jaworski, Nelsen} for details.\\
With a slight abuse of notation we write $\int_0^1\int_0^1F(x,y)dC(x,y)$ for the integral with respect to the measure $\gamma$.\\

It should be remarked that copulas are very popular in applied probability. In particular, they are used in
financial mathematics for modeling dependency structures among different kinds of risks;
see \cite{Beigl2013, Embrechtsbook, PucRuesch2013, RueschenBook2013}.\\

Notice, however, that in the calculation of these integrals one can also take advantage of the probabilistic
interpretation of a copula, as the joint distribution function of a random vector with uniform marginals.
More precisely, consider a random vector $(X_1,X_2)$ and suppose that its marginals $F_1$ and $F_2$ are continuous.
By applying the probability integral transformation to each
component, the random vector
\begin{equation*}
(U_1,U_2)= (F_1(X_1), F_2(X_2))
\end{equation*}
has uniform marginals. The copula of $(X_1,X_2)$ is then defined as the joint
cumulative distribution function (cdf) of $(U_1,U_2)$:
\begin{equation*}
C(u_1,u_2)=P(U_1\leq u_1, U_2\leq u_2).
\end{equation*}
In this setting, the notion of optimal coupling can be stated. Let $\mu$, $\nu$ be two probability measures
on $(\mathbb{R}, \mathcal{B})$. Coupling $\mu$ and $\nu$ means constructing two random variables $X_1, X_2$
on $\mathbb{R}$ in such a way that $X_1 \stackrel{d}{\sim} \mu$ and $X_2\stackrel{d}{\sim} \nu$, with
$\stackrel{d}{\sim}$ denoting equality in distribution. The couple $(X_1,X_2)$ is called a coupling of $(\mu,\nu)$.
Now, if we introduce a cost function $c(x,y)$ on $\mathbb{R}\times\mathbb{R}$, then the problem of finding
\begin{equation*}
\inf \mathbb{E}(c(X_1,X_2))
\end{equation*}
where the pair $(X_1,X_2)$ runs over all possible couplings of $(\mu,\nu)$ is called $c$-optimal coupling or
Monge-Kantorovich mass transportation problem (see e.g. \cite{Rueschen, Villani}). Equivalently, this problem can be stated in terms of measures,
\begin{equation*}
\inf\int c(x,y)dP(x,y),
\end{equation*}
where the infimum runs  over all joint probability measures $P$ on $\mathbb{R}\times\mathbb{R}$ with marginals $\mu$ and $\nu$.
For connections of extremal limits and copulas to the theory of optimal transport we refer to \cite{ITT}.\\
Through this paper we consider the equivalent $\sup$ problem
\begin{align}\label{eq:coupling}
\sup\{\mathbb{E}(c(X_1,X_2))\,\vert\,X_1,\,X_2\,\mbox{couplings of}\,\mu,\nu\,\mbox{with}\,P_{X_1}=\mu\,\mbox{and}\,P_{X_2}=\nu\}.
\end{align}
Furthermore, we focus our attention on couplings between uniform distributions. In Section 4 we use this approach for
solving a specific instance of maximizing the integral \eqref{1}.\\

A useful criterion for checking the optimality of a candidate solution is based on the notion of $c$-convexity.
A function $f:X\to\mathbb{R}$ is called $c$-convex if it has a representation $f(x)=\sup_y\{c(x,y)+a(y)\},$ for some function $a$.
The associated $c$-subdifferential of $f$ at $x$ is then defined through:
$$\partial_c f(x)=\{y\,\vert\,f(z)-f(x)\geq c(z,y)-c(x,y)\;\forall\,z\in X\}$$
and $\partial_c f=\{(x,y)\in X\times Y\,\vert\,y\in\partial_c f(x)\}$.\\
Notice, see \cite{Rueschen}, that $y\in\partial_c f(x)$
if and only if $\exists\,a(=a(y))\in\mathbb{R}$ such that
\begin{align}\label{eq:ubdiffeasy}
\psi_{y,a}(x)=c(x,y)+a(y)=f(x)\quad\mbox{and}\quad \psi_{y,a}(\xi)=c(\xi,y)+a(y)\leq f(\xi),
\end{align}
for every $\xi\in X$.\\
The dual problem of (\ref{eq:coupling}) is given by:
\begin{align*}
I(c)=\inf\left\{\int h_1\,d\mu+\int h_2 \,d\nu\,\vert\,c\leq h_1+ h_2,\,h_1\in L^1(\mu)\,\mbox{and}\,h_2\in L^1(\nu)\right\},
\end{align*}
and its study is the basis of the following theorem.
\begin{theorem}[Th. 4.7 from \cite{Rueschen}]\label{Th:optimality}
Let $c$ be such that $c(x,y)\geq a(x)+b(y)$ for some $a\in L^1(\mu)$, $b\in L^1(\nu)$) and assume finiteness of $I(c)$.
Then a pair $(X_1,X_2)$ with $X_1\stackrel{d}{\sim}\mu$, $X_2\stackrel{d}{\sim}\nu$ is an optimal $c-$coupling between $\mu$ and $\nu$
if and only if $$(X_1,X_2)\in\partial_c f\;\mbox{a.s.}$$ for some $c$-convex function $f$, equivalently, $X_2\in\partial_c f(X_1)$ a.s.
\end{theorem}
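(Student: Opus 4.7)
The plan is to leverage the Kantorovich-type duality between the primal supremum in (\ref{eq:coupling}) and the dual infimum $I(c)$. A first step is weak duality: for any coupling $(X_1,X_2)$ with marginals $\mu,\nu$ and any admissible pair $(h_1,h_2)$ with $c\leq h_1+h_2$, integrating this pointwise inequality against the joint law yields $\mathbb{E}[c(X_1,X_2)]\leq \int h_1\,d\mu+\int h_2\,d\nu$, whence $\sup\leq I(c)$. The lower bound $c\geq a+b$ with $a\in L^1(\mu)$, $b\in L^1(\nu)$ combined with the finiteness $I(c)<\infty$ will be needed both to keep all integrals well defined and, more importantly, to invoke the nontrivial strong duality $\sup=I(c)$ together with attainment on the dual side.

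For the sufficiency direction, assume $(X_1,X_2)\in\partial_c f$ almost surely for some $c$-convex function $f(x)=\sup_y\{c(x,y)+a(y)\}$. I would then set $h_1:=f$ and $h_2:=-a$; dual admissibility $c\leq h_1+h_2$ is immediate from the definition of $c$-convexity. The subdifferential relation $X_2\in\partial_c f(X_1)$, rephrased via (\ref{eq:ubdiffeasy}), forces $c(X_1,X_2)+a(X_2)=f(X_1)$ a.s., so integrating gives $\mathbb{E}[c(X_1,X_2)]=\int f\,d\mu-\int a\,d\nu\geq I(c)\geq \sup$. Combined with the trivial inequality $\mathbb{E}[c(X_1,X_2)]\leq\sup$ this forces equality and hence optimality of $(X_1,X_2)$.

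For the necessity direction, I would start from a dual optimizer $(h_1^\ast,h_2^\ast)$ attaining $I(c)=\sup$ (this is where strong duality is used). The decisive technical move is to replace $h_1^\ast$ by its $c$-transform $f(x):=\sup_y\{c(x,y)-h_2^\ast(y)\}$, which is automatically $c$-convex in the sense of the paper with $a(y):=-h_2^\ast(y)$, satisfies $f\leq h_1^\ast$ by admissibility of $(h_1^\ast,h_2^\ast)$, and therefore keeps $(f,h_2^\ast)$ optimal. For an optimal coupling $(X_1,X_2)$, comparing the identity $\mathbb{E}[c(X_1,X_2)]=\int f\,d\mu+\int h_2^\ast\,d\nu$ with the pointwise inequality $c\leq f+h_2^\ast$ forces the a.s.\ equality $c(X_1,X_2)+a(X_2)=f(X_1)$, which is precisely the characterization (\ref{eq:ubdiffeasy}) of $X_2\in\partial_c f(X_1)$.

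The principal obstacle is the strong duality step: guaranteeing the existence of a dual optimizer under the stated hypotheses. Once this is granted, both implications are essentially symmetric applications of the $c$-transform together with (\ref{eq:ubdiffeasy}). Strong duality in this generality is classical but technical, obtained via a Hahn--Banach or minimax argument on the convex set of admissible pairs; the hypothesis $c\geq a+b$ with integrable $a,b$ is precisely what makes the relevant functionals finite and closes the duality gap, and it is also what allows one to drop to the measurable $c$-transform without leaving the admissible class.
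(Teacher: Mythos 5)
This theorem is quoted by the paper from R\"uschendorf (Th.~4.7 of \cite{Rueschen}) \emph{without} proof, so there is no in-paper argument to compare against; I assess your proposal on its own terms. The sufficiency half (``$(X_1,X_2)\in\partial_c f$ a.s.\ implies optimal'') is essentially right: from $c(x,y)\le f(x)-a(y)$ everywhere, with equality a.s.\ along the given coupling, you get for any competing coupling $\pi'$ that $\int c\,d\pi'\le \int f\,d\mu-\int a\,d\nu=\mathbb{E}[c(X_1,X_2)]$, since the separable majorant integrates to the same value against every measure with marginals $\mu,\nu$. Note this route does not even require passing through $I(c)$, which spares you from having to show that $(f,-a)$ is an admissible $L^1$ dual pair; the hypotheses $c\ge a+b$ and $I(c)<\infty$ are what make $\int f\,d\mu$ and $\int a\,d\nu$ separately finite, as you indicate.

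The necessity half has a genuine gap, and it sits exactly where you flag the ``principal obstacle'': you assume the dual infimum $I(c)$ is \emph{attained} by a pair $(h_1^*,h_2^*)\in L^1(\mu)\times L^1(\nu)$. Under the stated hypotheses only the absence of a duality gap ($\sup=I(c)$) is available in this generality; attainment of the dual infimum in $L^1$ can fail (there are standard counterexamples in the optimal transport literature), and producing the $c$-convex potential is precisely the substantive content of the theorem, not a classical black box one may invoke. The standard repairs are: (i) take a minimizing sequence $(h_1^n,h_2^n)$ for $I(c)$, note that $\int\bigl(h_1^n(x)+h_2^n(y)-c(x,y)\bigr)\,d\pi\to 0$ with nonnegative integrand for an optimal $\pi$, extract an a.s.\ convergent subsequence and construct a (possibly non-integrable, extended-real-valued) $c$-convex limit potential; or (ii) bypass duality altogether by showing that an optimal plan is $c$-cyclically monotone and then applying the generalized Rockafellar construction to obtain a $c$-convex $f$ with $\operatorname{supp}\pi\subseteq\partial_c f$. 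Your $c$-transform manipulations around the assumed optimizer are correct, but as structured the argument reduces the theorem to an attainment claim that is both unproven and, in this generality, false.
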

\section{Main results}
As already pointed out in \cite{FialovaStrauch}, the solution of problem \eqref{1}
depends on the sign of the partial derivative $D_2=\frac{\partial^2F(x,y)}{\partial x\partial y}$.
Special cases have been already studied in the literature, like those described in Fig.1 and Fig. 2. (see \cite{FialovaStrauch}).
In particular, the upper and lower bounds for the first case are given precisely by the Fr\'echet-Hoeffding bounds,
while in the second case the authors provide a criterion \cite[Theorem 7]{FialovaStrauch} to find the corresponding extrema.
\begin{center}
\setlength{\unitlength}{1cm}
\begin{picture}(6,4)
\small
\thinlines
\put(-.3,0.4){$(0,0)$}
\put(2,0.4){$(1,0)$}
\put(-.3,3.4){$(0,1)$}
\put(2,3.4){$(1,1)$}
\put(.6,0){Figure 1.}
\put(.4,1.9){$D_2>0$}
\put(0,.75){\line(1,0){2.5}}
\put(2.5,.75){\line(0,1){2.5}}
\put(2.5,3.25){\line(-1,0){2.5}}
\put(0,3.25){\line(0,-1){2.5}}
\put(4,.75){\line(1,0){2.5}}
\put(6.5,.75){\line(0,1){2.5}}
\put(6.5,3.25){\line(-1,0){2.5}}
\put(4,3.25){\line(0,-1){2.5}}
\put(4,2){\line(1,0){2.5}}
\put(6.7,2){$Y$}
\put(4.4,1){$D_2>0$}
\put(4.4,2.3){$D_2<0$}
\put(3.7,0.4){$(0,0)$}
\put(6,0.4){$(1,0)$}
\put(3.7,3.4){$(0,1)$}
\put(6,3.4){$(1,1)$}
\put(4.6,0){Figure 2.}
\end{picture}
\end{center}
\normalsize
In this paper we maximize \eqref{1} in the special situation described in Fig.\ 3 as a problem of optimal coupling
(see \cite{Uckelmann}) and we provide a criterion for the instance of Fig. 4.
\begin{center}
\setlength{\unitlength}{1cm}
\begin{picture}(6,4)
\small
\thinlines
\put(-1.3,0.4){$(0,0)$}
\put(1,0.4){$(1,0)$}
\put(-1.3,3.4){$(0,1)$}
\put(1,3.4){$(1,1)$}
\put(-0.4,0){Figure 3.}
\put(-1,.75){\line(1,0){2.5}}
\put(1.5,.75){\line(0,1){2.5}}
\put(1.5,3.25){\line(-1,0){2.5}}
\put(-1,3.25){\line(0,-1){2.5}}
\put(1.5,0.75){\line(-1,1){2.5}}

\put(4,.75){\line(1,0){2.5}}
\put(4,.75){\line(0,1){2.5}}
\put(6.5,3.25){\line(-1,0){2.5}}
\put(6.5,3.25){\line(0,-1){2.5}}
\put(1.5,0.75){\line(-1,1){2.5}}
\put(4.83,.75){\line(0,1){2.5}}
\put(5.66,.75){\line(0,1){2.5}}

\put(4,2.2){\tiny{$D_2>0$}}
\put(4.84,1.7){\tiny{$D_2<0$}}
\put(5.67,2.2){\tiny{$D_2>0$}}

\put(.02,0.9){$D_2>0$}
\put(.4,2.6){$D_2<0$}
\put(3.7,0.4){$(0,0)$}
\put(6.5,0.4){$(1,0)$}
\put(3.7,3.4){$(0,1)$}
\put(6.5,3.4){$(1,1)$}
\put(5,0){Figure 4.}
\end{picture}
\end{center}
\normalsize
We start by determining the copula which maximizes \eqref{1} when the sign of the second derivative changes as
described in Fig.\ 4.\ We apply the following criterion \cite[Theorem 7]{FialovaStrauch}.
\begin{theorem}
Let us assume that a copula $C(x,y)$ maximizes the integral
$\int_0^1\int_0^1F(x,y)d{\tilde{C}}(x,y)$. 
Let $[X_1,X_2]\times[Y_1,Y_2]$ be an interval in $[0,1]^2$
such that
\begin{equation}
C(X_2,Y_2)+C(X_1,Y_1)-C(X_1,Y_2)-C(X_2,Y_1)>0
\end{equation}
and such that for every interior point $(x,y)$ the mixed second derivative $D_2$ has constant sign. Then we have:
\par{\rm(i)} 
if $D_2>0$, then
\begin{equation}
\label{eq75}
C(x,y)=\min(C(x,Y_2)+C(X_1,y)-C(X_1,Y_2),C(x,Y_1)+C(X_2,y)-C(X_2,Y_1)),
\end{equation}
\par{\rm(ii)}
if $D_2<0$, then
\begin{equation}
\label{eq76}
C(x,y)=\max(C(x,Y_2)+C(X_2,y)-C(X_2,Y_2),C(x,Y_1)+C(X_1,y)-C(X_1,Y_1)),
\end{equation}
\par\noindent
for every $(x,y)\in[X_1,X_2]\times[Y_1,Y_2]$.
\end{theorem}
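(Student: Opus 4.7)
The plan is to reduce the global optimization to a localized optimal transport problem on the rectangle $R = [X_1,X_2] \times [Y_1,Y_2]$, solve that sub-problem via the classical comonotone/countermonotone rearrangement principle (a consequence of Theorem~\ref{Th:optimality}), and then translate the answer back into the identities \eqref{eq75} and \eqref{eq76} for $C$.

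First, I would exploit the linearity of $C \mapsto \int F\,dC$ to set up the localization. Any modification of $C$ that agrees with $C$ outside $R$ and is still a copula leaves the boundary values $C\vert_{\partial R}$ unchanged, which in turn pins down the two marginals of the restricted doubly stochastic measure. Explicitly, the cumulative masses in the $x$- and $y$-strips are
\begin{align*}
F_X(x) &:= C(x,Y_2)-C(x,Y_1)-C(X_1,Y_2)+C(X_1,Y_1),\\
F_Y(y) &:= C(X_2,y)-C(X_2,Y_1)-C(X_1,y)+C(X_1,Y_1),
\end{align*}
and the total mass on $R$ is $m = C(X_2,Y_2)+C(X_1,Y_1)-C(X_1,Y_2)-C(X_2,Y_1)>0$ by hypothesis. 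Global optimality of $C$ therefore forces $\gamma_C\vert_R$ to solve the Monge--Kantorovich sub-problem of maximizing $\int_R F\,d\gamma$ over measures $\gamma$ on $R$ with marginals $F_X,F_Y$.

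For the sub-problem I would appeal to the classical rearrangement principle: a constant sign of $D_2=\partial_{xy}F$ on the interior of $R$ means that $F$ is supermodular on $R$ when $D_2>0$ and submodular when $D_2<0$. In the supermodular case the supremum is attained by the comonotone coupling of $F_X$ and $F_Y$, whose joint distribution function equals the Fr\'echet--Hoeffding upper bound $\min(F_X(x),F_Y(y))$; in the submodular case it is attained by the countermonotone coupling, whose joint distribution is $\max(F_X(x)+F_Y(y)-m,0)$. This rearrangement fact can be derived either through a direct uncrossing inequality or, more systematically, via Theorem~\ref{Th:optimality} by exhibiting an explicit $c$-convex potential $f$ whose $c$-subdifferential is the graph of the monotone (respectively antitone) rearrangement between $F_X$ and $F_Y$.

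Finally, equating $\gamma_C([X_1,x]\times[Y_1,y]) = C(x,y)-C(x,Y_1)-C(X_1,y)+C(X_1,Y_1)$ with $\min(F_X(x),F_Y(y))$ or $\max(F_X(x)+F_Y(y)-m,0)$ and substituting the expressions for $F_X,F_Y,m$ in terms of $C$, a routine algebraic simplification produces \eqref{eq75} and \eqref{eq76}. The step I anticipate as the main obstacle is the rigorous localization: one must check that any measure on $R$ with marginals $F_X,F_Y$ can be glued to $\gamma_C\vert_{[0,1]^2\setminus R}$ to produce a valid doubly stochastic measure on the whole square, i.e.\ that the two-increasing inequality survives across rectangles straddling $\partial R$. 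This compatibility reduces to the matching of the one-dimensional marginals on $\partial R$, which is automatic by construction, so the technical obstacle is essentially bookkeeping rather than new machinery.
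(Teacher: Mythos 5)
The paper itself offers no proof of this statement: it is quoted as Theorem~7 of \cite{FialovaStrauch}, so there is no in-paper argument to compare against. Judged on its own, your localization strategy is sound and is essentially the optimal-transport reading of the result. The gluing step really is bookkeeping: if $\gamma'$ on $R=[X_1,X_2]\times[Y_1,Y_2]$ has the same two marginals as $\gamma_C\vert_R$, then $\gamma_C\vert_{[0,1]^2\setminus R}+\gamma'$ is a positive measure with the same marginals as $\gamma_C$ on every Borel set, hence doubly stochastic, so global optimality of $C$ does force $\gamma_C\vert_R$ to solve the sub-problem with marginals $F_X,F_Y$. Your algebra converting the local Fr\'echet--Hoeffding bounds $\min(F_X(x),F_Y(y))$ and $\max(F_X(x)+F_Y(y)-m,0)$ back into \eqref{eq75} and \eqref{eq76} also checks out in both cases.

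The one genuine gap is the passage from ``the comonotone (resp.\ countermonotone) coupling attains the supremum'' to ``$\gamma_C\vert_R$ equals that coupling.'' Attainment alone does not identify the maximizer: for a merely supermodular cost the maximizer of the sub-problem need not be unique, while the conclusion of the theorem is a pointwise identity for $C$ on all of $R$. You must use the strictness of the hypothesis $D_2>0$ (resp.\ $D_2<0$) on the interior: strict supermodularity, combined with the uncrossing inequality applied to two points of the support of $\gamma_C\vert_R$ (equivalently, the $c$-cyclical monotonicity of the support that underlies Theorem~\ref{Th:optimality}), shows that the support of \emph{any} maximizer is a monotone increasing (resp.\ decreasing) subset of $R$, and a measure with prescribed marginals and monotone support is necessarily the comonotone (resp.\ countermonotone) coupling, whose distribution function is the corresponding Fr\'echet--Hoeffding bound. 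With that uniqueness step made explicit, your argument is complete.
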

This result can be illustrated by the following pictures, where the symbols $\oplus$ and $\ominus$ in a corner mean
that the value of $C$ in that point is taken with positive and negative sign, respectively.
\begin{center}
\setlength{\unitlength}{1cm}
\begin{picture}(6,4)

\put(-1,.75){\line(1,0){2.5}}
\put(1.5,.75){\line(0,1){2.5}}
\put(1.5,3.25){\line(-1,0){2.5}}
\put(-1,3.25){\line(0,-1){2.5}}

\put(6,.75){\line(1,0){2.5}}
\put(6,3.25){\line(0,-1){2.5}}
\put(8.5,.75){\line(0,1){2.5}}
\put(8.5,3.25){\line(-1,0){2.5}}
\put(6,.75){$\ominus$}
\put(8.25,3){$\ominus$}
\put(7.05,.75){$\oplus$}
\put(4.8,1.9){$(X_1,y)$}
\put(4.8,0.3){$(X_1,Y_1)$}
\put(8.,0.3){$(X_2,Y_1)$}
\put(4.8,3.4){$(X_1,Y_2)$}
\put(8,3.4){$(X_2,Y_2)$}
\put(6.8,1.5){$(x,y)$}
\put(6.6,-0.2){Figure 6.}
\put(6.7,3.4){$(x,Y_2)$}
\put(6.7,.3){$(x,Y_1)$}
\put(8.5,1.9){$(X_2,y)$}
\put(6,1.9){\line(1,0){2.5}}
\put(7.3,.75){\line(0,1){2.5}}
\put(8.25,1.9){$\oplus$}
\put(7.3,3){$\oplus$}

\put(6,1.65){$\oplus$}

\put(-1,1.9){\line(1,0){2.5}}
\put(0.3,.75){\line(0,1){2.5}}
\put(-1,1.9){$\oplus$}
\put(0.05,3){$\oplus$}
\put(-1,3){$\ominus$}
\put(-2.2,1.9){$(X_1,y)$}
\put(-0.3,3.4){$(x,Y_2)$}
\put(1.25,0.75){$\ominus$}
\put(0.27,.75){$\oplus$}
\put(1.25,1.67){$\oplus$}
\put(-0.3,.3){$(x,Y_1)$}
\put(1.5,1.9){$(X_2,y)$}
\put(-2.2,0.3){$(X_1,Y_1)$}
\put(1.3,0.3){$(X_2,Y_1)$}
\put(-2.2,3.4){$(X_1,Y_2)$}
\put(1.3,3.4){$(X_2,Y_2)$}
\put(-0.2,1.5){$(x,y)$}
\put(-0.4,-0.2){Figure 5.}
\end{picture}
\end{center}
In order to apply this criterion to the case described in Fig.\ 4 we divide the unit square $[0,1]^2$
into $[0,x_1]\times[0,1]$, $[x_1,x_2]\times[0,1]$ and $[x_2,1]\times[0,1]$, as can be seen in Fig. 7.\\
Then, following the above statement, if $x\in(0,x_1)\cup(x_2,1)$,
we apply \eqref{eq75} in the same way as in Fig.\ 5 and if $x\in(x_1,x_2)$, we apply \eqref{eq76}  as in Fig.\ 6.
\begin{center}
\setlength{\unitlength}{0.72cm}
\begin{picture}(12,10)
\footnotesize
\thicklines
\put(0,0){\line(1,0){12}}
\put(12,0){\line(0,1){8}}
\put(8,0){\line(0,1){8}}
\put(12,8){\line(-1,0){12}}
\put(0,8){\line(0,-1){8}}
\put(-1,0){$(0,0)$}
\put(12,0){$(1,0)$}
\put(-1,8){$(0,1)$}
\put(0,7.8){$\ominus$}
\put(12,8){$(1,1)$}
\put(6,-0.7){Figure 7.}
\put(4.2,0){\line(0,1){8}}
\put(1.3,8.7){$D_2>0$}
\put(8.8,8.7){$D_2>0$}
\put(1.45,8.1){$(x,1)$}
\put(3.7,8.1){$(x_1,1)$}
\put(7.5,8.1){$(x_2,1)$}
\put(1.8,7.8){$\oplus$}
\put(1.2,5.1){$(x,y)$}
\put(0,5){$\oplus$}
\put(-1,5){$(0,y)$}
\put(9.6,8.1){$(x,1)$}
\put(9.8,7.8){$\oplus$}
\put(9.2,5.1){$(x,y)$}
\put(6.8,5.1){$(x_2,y)$}
\put(8,7.8){$\ominus$}

\put(8,5){$\oplus$}

\put(3.95,4.75){$\oplus$}
\put(4.2,5){$(x_1,y)$}
\put(12,5){$(1,y)$}
\put(2,0.05){$\oplus$}
\put(8,5){$\oplus$}
\put(11.8,4.8){$\oplus$}
\put(10,0.05){$\oplus$}
\put(1.6,-.3){$(x,0)$}
\put(9.7,-.3){$(x,0)$}
\put(3.9,0.05){$\ominus$}
\put(11.75,0.05){$\ominus$}
\put(3.7,-.3){$(x_1,0)$}
\put(8,-.3){$(x_2,0)$}
\put(5,8.7){$D_2<0$}
\put(5.6,8.1){$(x,1)$}
\put(6,7.8){$\oplus$}
\put(7.75,7.8){$\ominus$}
\put(6.05,3.6){$(x,y)$}
\put(7.75,3.5){$\oplus$}
\put(8.05,3.5){$(x_2,y)$}
\put(4.27,0.05){$\ominus$}
\put(5.6,-.3){$(x,0)$}
\put(5.75,0.05){$\oplus$}
\put(2.9,3.4){$(x_1,y)$}
\put(4.2,3.3){$\oplus$}
\thinlines
\put(2,0){\line(0,1){8}}
\put(6,0){\line(0,1){8}}
\put(10,0){\line(0,1){8}}
\put(0,5){\line(1,0){4.2}}
\put(8,5){\line(1,0){4}}
\put(4.2,3.5){\line(1,0){3.8}}

\end{picture}
\end{center}
\normalsize
\vskip.5cm
Consequently, the following Theorem holds true.
\begin{theorem}
Let $0<x_1<x_2<1$ and
\begin{equation}\label{53}
F(x,y)=
\begin{cases}
F_1(x,y)&\text{ if }x\in(0,x_1), \frac{\partial^2F_1(x,y)}{\partial x\partial y}>0,\\
F_2(x,y)&\text{ if }x\in(x_1,x_2),\frac{\partial^2F_2(x,y)}{\partial x\partial y}<0,\\
F_3(x,y)&\text{ if }x\in(x_2,1),\frac{\partial^2F_3(x,y)}{\partial x\partial y}>0.
\end{cases}
\end{equation}
Then the copula maximizing $\int_0^1\int_0^1 F(x,y)d\tilde{C}(x,y)$ has the form
\begin{equation}
\label{25}
C(x,y)=
\begin{cases}
\min(x,h_1(y))&\text{ if }x\in[0,x_1],\\
\max(x+h_2(y)-x_2,h_1(y))&\text{ if }x\in[x_1,x_2],\\
\min(x-x_2+h_2(y),y)&\text{ if }x\in[x_2,1],
\end{cases}
\end{equation}
where $h_1(y)=C(x_1,y)$, and $h_2(y)=C(x_2,y)$. 
\end{theorem}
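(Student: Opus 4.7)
The plan is to apply the preceding theorem (Theorem~7 of \cite{FialovaStrauch}) separately on each of the three vertical slabs $[0,x_1]\times[0,1]$, $[x_1,x_2]\times[0,1]$, $[x_2,1]\times[0,1]$, exploiting the fact that on each slab the mixed derivative $D_2$ has constant sign, as indicated in Fig.~7. In each case, one picks the appropriate rectangle $[X_1,X_2]\times[Y_1,Y_2]$ with sides on $x\in\{0,x_1,x_2,1\}$ and $y\in\{0,1\}$, and then simplifies either (\ref{eq75}) or (\ref{eq76}) by using the copula boundary values $C(x,0)=C(0,y)=0$, $C(x,1)=x$, $C(1,y)=y$.

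Concretely, I would proceed as follows. On the left slab, take $X_1=0$, $X_2=x_1$, $Y_1=0$, $Y_2=1$; since $D_2>0$, formula (\ref{eq75}) and the marginal identities collapse the two candidates to $x$ and $h_1(y)$, yielding $C(x,y)=\min(x,h_1(y))$. On the middle slab, take $X_1=x_1$, $X_2=x_2$, $Y_1=0$, $Y_2=1$; since $D_2<0$, formula (\ref{eq76}) gives $C(x,y)=\max(x+h_2(y)-x_2,\,h_1(y))$. On the right slab, take $X_1=x_2$, $X_2=1$, $Y_1=0$, $Y_2=1$; again $D_2>0$ and (\ref{eq75}) with $C(1,y)=y$ reduces to $C(x,y)=\min(x-x_2+h_2(y),\,y)$. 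This directly matches the three cases of (\ref{25}), and by construction both expressions agree on the shared boundary lines $x=x_1$ and $x=x_2$, so the three local formulas patch together continuously.

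The step that requires care, and which I regard as the main obstacle, is the verification of the positivity hypothesis
\[
C(X_2,Y_2)+C(X_1,Y_1)-C(X_1,Y_2)-C(X_2,Y_1)>0
\]
for the chosen rectangle on each slab, since Theorem~7 of \cite{FialovaStrauch} is formulated conditionally on this assumption. For the left slab this inequality reads $x_1-h_1(1)>0$ (equivalently $h_1(1)<x_1$ is false here because $h_1(1)=x_1$; one must instead argue that the mass is strictly positive by the marginal condition $\lambda([0,x_1])=x_1>0$), and similarly for the other two slabs. If the inequality failed on some sub-slab, the slab would carry no mass and the maximization there would be vacuous, so one may without loss of generality assume positivity; this is the argument I would use to bridge the gap.

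Once positivity is secured, the proof is essentially a bookkeeping exercise: identify the ``$\oplus$'' and ``$\ominus$'' corners as in Figs.~5--6 for each slab, substitute the known boundary values, and read off (\ref{25}). The functions $h_1$ and $h_2$ are left as free parameters determined implicitly by the copula condition along the two interior lines and by the specific form of $F_1,F_2,F_3$, but the \emph{shape} of the optimizer depends only on the sign pattern of $D_2$ and on these two boundary curves, which is exactly what the theorem asserts.
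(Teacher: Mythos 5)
Your proposal is correct and follows exactly the paper's (largely implicit) argument: split $[0,1]^2$ into the three vertical slabs, apply \eqref{eq75} on the outer slabs and \eqref{eq76} on the middle one with $Y_1=0$, $Y_2=1$, and simplify via the copula boundary conditions. One small correction to the step you flagged as the main obstacle: the positivity hypothesis needs no special argument, since for the three rectangles the quantity $C(X_2,Y_2)+C(X_1,Y_1)-C(X_1,Y_2)-C(X_2,Y_1)$ evaluates directly to $x_1$, $x_2-x_1$ and $1-x_2$ respectively (not $x_1-h_1(1)$), all strictly positive by the assumption $0<x_1<x_2<1$.
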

As we will see below, this result implies that in an ideal situation the problem is reduced to the
determination of suitable functions $h_1$ and $h_2$.\\
\\
Before going on we need to determine $dC(x,y)$ for the special situation of \eqref{25}.
For this reason let us consider the rectangles
\begin{center}
\setlength{\unitlength}{0.9cm}
\begin{picture}(6,4)
\small
\thinlines
\put(-1.7,0.4){$(x,y)$}
\put(0.6,0.4){$(x+\dd x,y)$}
\put(-1.7,3.4){$(x,y+\dd y)$}
\put(0.6,3.4){$(x+\dd x,y+\dd y)$}
\put(-0.4,0){Figure 8.}
\put(-1,.75){\line(1,0){2.5}}
\put(1.5,.75){\line(0,1){2.5}}
\put(1.5,3.25){\line(-1,0){2.5}}
\put(-1,3.25){\line(0,-1){2.5}}
\put(-1,0.75){\line(1,1){2.5}}

\put(6,.75){\line(1,0){2.5}}
\put(6,3.25){\line(0,-1){2.5}}
\put(8.5,.75){\line(0,1){2.5}}
\put(8.5,3.25){\line(-1,0){2.5}}

\put(4.8,0.3){$(x-\dd x,y)$}
\put(7.9,0.3){$(x,y)$}
\put(4.5,3.4){$(x-\dd x,y+\dd y)$}
\put(7.9,3.4){$(x,y+\dd y)$}
\put(8.5,0.75){\line(-1,1){2.5}}
\put(6.6,-0.2){Figure 9.}

\end{picture}
\end{center}
\normalsize
and the - from the copula induced - measures which are defined by
\begin{equation}
\label{4}
\gamma_C(\dd x,\dd y)=C(x,y)+C(x+\dd x,y+\dd y)-C(x,y+\dd y)-C(x+\dd x)
\end{equation}
and
\begin{equation}
\label{7}
\gamma_C(\dd x,\dd y)=C(x-\dd x,y)+C(x,y+\dd y)-C(x-\dd x,y+\dd y)-C(x,y),
\end{equation}
where $(\dd x,\dd y)$ stands for the infinitesimal rectangles from Fig. 8 and Fig. 9.\\
We consider the three regions in Fig. 7 where the second derivative changes sign separately.
\begin{itemize}
\item[(i)] $x\in(0,x_1)$.\\
Then $x=h_1(y)$ and $C(x,y)=\min(x,h_1(y))$.
Thus by \eqref{4}
\begin{equation}
\label{5}
\gamma_C(\dd x,\dd y)=h_1(y)+(h_1(y)+h_1'(y)\dd y)-h_1(y)-h_1(y)=h_1'(y)\dd y.
\end{equation}
\item[(ii)] $x\in(x_1,x_2)$.\\
Then $x=x_2-h_2(y)+h_1(y)$ and $C(x,y)=\max(x+h_2(y)-x_2,h_1(y))$.

Let us observe that
\begin{eqnarray*}
C(x,y+\dd y)&=&\max(x+h_2(y+\dd y)-x_2,h_1(y+\dd y))\\
&=&\max(h_1(y)+h'_2(y)\dd y,h_1(y)+h_1'(y)\dd y)\\
&=&h_1(y)+h_2'(y)\dd y,
\end{eqnarray*}
since for every $(x,y)$ such that $x+h_2(y)-x_2=h_1(y)$ we have $\frac{\dd x}{\dd y}+h'_2(y)=h_1'(y)$ and $\frac{\dd x}{\dd y}<0$.\\
Similarly
\begin{eqnarray*}
C(x-\dd x,y)&=&\max(x-\dd x+h_2(y)-x_2,h_1(y))\\
&=&\max(h_1(y)-\dd x,h_1(y))=h_1(y),
\end{eqnarray*}
since $\dd x>0$. Thus from \eqref{7} we have
\begin{align}
\label{23}
\gamma_C(\dd x,\dd y)&=h_1(y)+(h_1(y)+h_2'(y)\dd y)-h_1(y)-h'_1(y)-h_1(y)\dd y\nonumber\\
&=(h_2'(y)-h_1'(y))\dd y.
\end{align}
\item[(iii)] $x\in(x_2,1)$.\\
Then $x=x_2-h_2(y)+y$ and $C(x,y)=\min(x-x_2+h_2(y),y)$.
Let us observe that
\begin{eqnarray*}
C(x+\dd x,y+\dd y)&=&\min(x+\dd x-x_2+h_2(y+\dd y),y+\dd y)\\
&=&\min(y+\dd x+h_2'(y)\dd y,y+\dd y)=y+\dd y,
\end{eqnarray*}
since for every $(x,y)$ such that $x-x_2+h_2(y)=y$ we have $\dd x+h_2'(y)\dd y=\dd y$.\\
Moreover
\begin{eqnarray*}
C(x,y+\dd y)&=&\min(x-x_2+h_2(y+\dd y),y+\dd y)\\
&=&\min(y+h_2'(y)\dd y,y+\dd y)=y+h_2'(y)\dd y,
\end{eqnarray*}
since $h_2'(y)\le1$ and
\begin{equation*}
C(x+\dd x,y)=\min(x+\dd x-x_2+h_2(y),y)=\min(y+\dd x,y)=y,
\end{equation*}
since $\dd x>0$. Therefore with \eqref{4} we arrive at
\begin{align}
\label{24}
\gamma_C(\dd x,\dd y)&=C(x,y)+C(x+\dd x,y+\dd y)-C(x,y+\dd y)-C(x+\dd x)\nonumber\\
&=y+y+\dd y-(y+h_2'(y)\dd y)-y=(1-h_2'(y))\dd y.
\end{align}
\end{itemize}
Altogether the measure $\gamma_C$ of the infinitesimal rectangles and hence $dC(x,y)$ is given by
\begin{equation}
\label{37}
\gamma_C(\dd x,\dd y)=
\begin{cases}
h_1'(y)\dd y&\text{ if }x\in[0,x_1], x=h_1(y),\\
(h_2'(y)-h_1'(y))\dd y&\text{ if }x\in[x_1,x_2], x=x_2-h_2(y)+h_1(y),\\
(1-h_2'(y))\dd y&\text{ if }x\in[x_2,1],x=x_2-h_2(y)+y.
\end{cases}
\end{equation}
Our next step is to identify situations in which $C$ is indeed a copula.
\begin{theorem}\label{41}
The function $C(x,y)$ defined by \eqref{25} is a copula if and only if
\begin{itemize}
\item[(i)] $h_1(y)$ and $h_2(y)$ are increasing;
\item[(ii)] $h_1(0)=0$, $h_2(0)=0$;
\item[(iii)] $h_1(1)=x_1$, $h_2(1)=x_2$;
\item[(iv)] $0\le h_1(y)\le h_2(y)\le y$;
\item[(v)] $0\le h_1'(y)\le h_2'(y)\le 1$.
\end{itemize}
\end{theorem}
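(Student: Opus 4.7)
The strategy is to check the two defining properties of a copula -- the marginal/boundary conditions and the $2$-increasing condition -- against the conditions (i)--(v), proving both implications.

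\textbf{Necessity.} Assume $C$ given by \eqref{25} is a copula. I would begin by imposing each boundary condition in turn.
Evaluating $C(x,0)=0$ on the strip $[0,x_1]$ gives $\min(x,h_1(0))=0$ for every $x$, which forces $h_1(0)=0$; reading the same condition on $[x_2,1]$ yields $h_2(0)=0$, proving (ii). Evaluating $C(x,1)=x$ on $[0,x_1]$ gives $\min(x,h_1(1))=x$, hence $h_1(1)\ge x_1$, and the middle-strip formula at $y=1$ forces $h_1(1)\le x_1$ and $h_2(1)=x_2$, proving (iii). The matching conditions at the interfaces $x=x_1$ and $x=x_2$ between the three pieces of \eqref{25} force respectively $h_1(y)\le x_1$ and $h_1(y)\le h_2(y)\le y$, which yields (iv). Finally, to obtain (v), I apply the $2$-increasing inequality to infinitesimal rectangles inside each of the three strips; by the computation that produced \eqref{37}, the mass of an infinitesimal rectangle is $h_1'(y)\,\mathrm{d}y$, $(h_2'(y)-h_1'(y))\,\mathrm{d}y$, and $(1-h_2'(y))\,\mathrm{d}y$ in the three strips. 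Nonnegativity in each strip gives exactly $0\le h_1'\le h_2'\le 1$. Monotonicity of $h_1$ and $h_2$ in (i) follows immediately.

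\textbf{Sufficiency.} Assume (i)--(v). The boundary conditions are then direct: using $h_1(0)=h_2(0)=0$ and the monotonicity/nonnegativity of $h_1,h_2$, each piece of \eqref{25} vanishes when $y=0$ or $x=0$; using $h_1(1)=x_1$, $h_2(1)=x_2$ together with (iv), the identities $C(x,1)=x$ and $C(1,y)=y$ hold in every strip. Continuity of $C$ across the vertical lines $x=x_1$ and $x=x_2$ is an immediate consequence of $0\le h_1\le h_2\le y$. The main step is the $2$-increasing property on arbitrary rectangles $R=[a,b]\times[c,d]\subset[0,1]^2$. Here I rely on the decomposition already carried out to obtain \eqref{37}: the measure $\gamma_C$ associated to \eqref{25} is supported on the three curves $x=h_1(y)$, $x=x_2-h_2(y)+h_1(y)$, $x=x_2-h_2(y)+y$ lying in the three strips, with linear densities $h_1'(y)$, $h_2'(y)-h_1'(y)$, $1-h_2'(y)$ respectively. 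Under hypothesis (v) these three densities are nonnegative, so $\gamma_C$ is a sum of three nonnegative singular measures. Consequently, $\gamma_C(R)\ge 0$ for every rectangle $R$, which is precisely the $2$-increasing condition.

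\textbf{Main obstacle.} The delicate point is the $2$-increasing direction for arbitrary rectangles, especially those straddling two or all three strips, since in each strip $C$ is itself a $\min$ or $\max$ of two smooth pieces. The cleanest way I see to bypass a long case analysis on $R=[a,b]\times[c,d]$ is to exploit the singular-measure picture already established through \eqref{5}, \eqref{23}, \eqref{24}: once we know $\gamma_C$ is a nonnegative sum of three curve-supported measures with explicit densities, additivity and nonnegativity on rectangles are automatic. Therefore the verification reduces to checking that (v) is exactly what ensures each of the three densities is nonnegative, closing the equivalence.
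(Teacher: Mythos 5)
Your proposal is correct and follows essentially the same route as the paper: both directions hinge on the boundary/marginal identities checked strip by strip and on the singular-measure decomposition \eqref{37}, whose three densities $h_1'$, $h_2'-h_1'$, $1-h_2'$ encode condition (v). The only cosmetic difference is that the paper derives $h_i'\le 1$ and $h_1'\le h_2'$ from rectangle inequalities of the form \eqref{8} and a separate slope argument, whereas you read all of (v) directly off the nonnegativity of the densities in \eqref{37}; the substance is the same.
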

\begin{proof}
The structure of the proof is as follows: we first prove the necessary condition by showing that if $C$ is a copula,
then properties $(i)-(v)$ are satisfied. Then we exploit these properties to show that $C$ is a copula.\\
Let $C(x,y)$ be a copula and $h_1(y)=C(x_1,y)$ and $h_2(y)=C(x_2,y)$.\\
Properties $(i)-(iii)$ are straightforward. In order to prove $(iv)$, let us consider the rectangle $[x,1]\times [0,1]$.
Since $C(x,y)$ is a copula, we have

\begin{equation}\label{38}
C(x,0)+C(1,y_1)-C(x,y_1)-C(1,0)=y_1-C(x,y_1)\ge0,
\end{equation}
and thus $y_1\ge C(x,y_1)$.\\
We proceed in a similar way to prove $(v)$. Let us consider the rectangle $[x,1]\times[y_1,y_2]$. For an arbitrary copula $C(x,y)$ we have
\begin{equation}
\label{8}
C(x,y_1)+C(1,y_2)-C(x,y_2)-C(1,y_1)=C(x,y_1)+y_2-C(x,y_2)-y_1\ge0.
\end{equation}
Then
$$y_2-y_1\ge C(x,y_2)-C(x,y_1)$$ and thus
$C'(x,y)\le1$ a.e.. This implies $h'(y)\le1$ a.e. for $h(y)=C(x,y)$ (see also \cite[Theorem 2.2.7.]{Nelsen} ).
Furthermore
$$h_1'(y)\le h_2'(y)$$
since for every $(x,y)$ such that $x+h_2(y)-x_2=h_1(y)$ with $x\in[x_1,x_2]$  we have
$\frac{\dd x}{\dd y}+h'_2(y)=h_1'(y)$ and $\frac{\dd x}{\dd y}<0$.\\
On the other hand, it follows from $(v)$ and \eqref{37} that $\gamma_C(\dd x,\dd y)$ is
nonnegative for every $(x,y)\in[0,1]^2$ and by \eqref{25} that $C(x,0)=C(0,y)=0$.
Thus $C(x,y)$ is a distribution function.
We need to show that $C(x,1)=x$ and $C(1,y)=y$ for every $(x,y)\in[0,1]^2$. Indeed we have
\begin{equation*}
C(x,1)=
\begin{cases}
\min(x,h_1(1))=\min(x,x_1)=x&\text{ if }x\in[0,x_1],\\
\max(x+h_2(1)-x_2,h_1(1))=\max(x,x_1)=x&\text{ if }x\in[x_1,x_2],\\
\min(x-x_2+h_2(1),1)=\min(x,1)=x&\text{ if }x\in[x_2,1].
\end{cases}
\end{equation*}
For $x=1$ we need
\begin{equation}
\label{39}
C(1,y)=\min(1-x_2+h_2(y),y)=y.
\end{equation}
Since $h_2(1)=x_2$, then \eqref{39} is equivalent
\begin{equation}
\label{40}
1-y\ge h_2(1)-h_2(y),
\end{equation}
which holds true, since $h_2(1)-h_2(y)=(1-y)h_2'(y^*)$ for some $y^*\in(y,1)$ and the derivative satisfies $(v)$.
\end{proof}
Theorem \ref{41} implies the following bounds on candidate functions $h_1$ and $h_2$.
\begin{equation*}
\overline{h_1}(y)=
\begin{cases}
y&\text{ if }y\in[0,x_1],\\
x_1&\text{ if }y\in[x_1,1],
\end{cases}
\quad
\underline{h_1}(y)=
\begin{cases}
0&\text{ if }y\in[0,1-x_1],\\
y-(1-x_1)&\text{ if }y\in[1-x_1,1],
\end{cases}
\end{equation*}
\begin{equation*}
\overline{h_2}(y)=
\begin{cases}
y&\text{ if }y\in[0,x_2],\\
x_2&\text{ if }y\in[x_2,1],
\end{cases}
\quad
\underline{h_2}(y)=
\begin{cases}
0&\text{ if }y\in[0,1-x_2],\\
y-(1-x_2)&\text{ if }y\in[1-x_2,1],
\end{cases}
\end{equation*}
where
\begin{equation}
\label{51}
\underline{h_1}(y)\le h_1(y)\le
\overline{h_1}(y),\quad
\underline{h_2}(y)\le h_2(y)\le
\overline{h_2}(y).
\end{equation}
Now, we return to the integral \eqref{1}.
\begin{theorem}
Let us define a function $G$ by
\begin{align}\label{29}
G:=&G(y,h_1,h_2,h_1',h_2')=\nonumber\\
&=F_1(h_1(y),y)h_1'(y)+F_2(x_2-h_2(y)+h_1(y),y)(h_2'(y)-h_1'(y))\nonumber\\
&+F_3(x_2-h_2(y)+y,y)(1-h_2'(y)).
\end{align}
If $h_1,h_2$ maximize $\int_0^1G\dd y$ and satisfy Theorem \ref{41}, then
\begin{equation}\label{57}
\max_{C(x,y)-\text{ copula}}\int_0^1\int_0^1F(x,y)\dd C(x,y)=\int_0^1G\dd y.
\end{equation}
If not, then we only have the following inequality
\begin{equation}
\label{45}
\max_{C(x,y)\in\mathcal{C}}\int_0^1\int_0^1F(x,y)\dd C(x,y)\le\int_0^1G\dd y.
\end{equation}
The class $\mathcal{C}$ is the set of all copulas of the form \eqref{25} with $h_1$, $h_2$ fulfilling the assumptions of Theorem \ref{41}.
\end{theorem}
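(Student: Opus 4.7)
The plan is to convert the double integral $\int_0^1\!\int_0^1 F(x,y)\,\dd C(x,y)$ into the one-dimensional integral $\int_0^1 G\,\dd y$ for every $C\in\mathcal{C}$ by means of the measure formulas \eqref{37}, and then to combine this with the criterion \eqref{eq75}--\eqref{eq76} applied strip-by-strip in order to identify the copulas that can realize the global supremum.

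\textbf{Step one.} Fix $C\in\mathcal{C}$ with $h_1(y)=C(x_1,y)$, $h_2(y)=C(x_2,y)$. By \eqref{37}, $\gamma_C$ is singular and supported on the three curves $\Gamma_1\colon x=h_1(y)$, $\Gamma_2\colon x=x_2-h_2(y)+h_1(y)$, $\Gamma_3\colon x=x_2-h_2(y)+y$, lying in the three vertical strips of Fig.\ 7. Splitting the double integral according to these strips, parametrizing each $\Gamma_k$ by $y\in[0,1]$, and substituting the corresponding density from \eqref{37} for $\dd\gamma_C$, I obtain
\begin{equation*}
\int_0^1\!\int_0^1 F(x,y)\,\dd C(x,y)=\int_0^1 G\bigl(y,h_1(y),h_2(y),h_1'(y),h_2'(y)\bigr)\,\dd y.
\end{equation*}
Taking the supremum over $C\in\mathcal{C}$ and then relaxing to the unconstrained variational problem for $\int_0^1 G\,\dd y$ yields \eqref{45} at once.

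\textbf{Step two.} I next argue that any copula $\tilde C$ maximizing $\int\!\int F\,\dd\tilde C$ among all copulas already lies in $\mathcal{C}$. Since $D_2$ has constant sign on each of the three vertical strips of Fig.\ 7, the criterion \eqref{eq75}--\eqref{eq76} applies with $(X_1,X_2)\in\{(0,x_1),(x_1,x_2),(x_2,1)\}$ and $(Y_1,Y_2)=(0,1)$. The marginal identities $\tilde C(x,0)=0$, $\tilde C(x,1)=x$, $\tilde C(0,y)=0$, $\tilde C(1,y)=y$ reduce the right-hand sides of \eqref{eq75}--\eqref{eq76} to the three branches of \eqref{25} with $h_1(y)=\tilde C(x_1,y)$, $h_2(y)=\tilde C(x_2,y)$; Theorem \ref{41} then confirms that $\tilde C\in\mathcal{C}$. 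Combined with Step one, this implies that the supremum over all copulas coincides with the supremum of $\int_0^1 G\,\dd y$ over admissible pairs $(h_1,h_2)$ from Theorem \ref{41}. When the unconstrained maximizers of $\int_0^1 G\,\dd y$ satisfy these admissibility conditions, the constrained and unconstrained suprema agree and \eqref{57} follows.

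I expect the main obstacle to lie in Step one, specifically in rigorously justifying the replacement of $\dd\gamma_C$ by its density in $\dd y$ along each singular curve and in handling the interfaces $x=x_1$ and $x=x_2$ where two strips meet, as well as verifying that the positivity hypothesis $C(X_2,Y_2)+C(X_1,Y_1)-C(X_1,Y_2)-C(X_2,Y_1)>0$ of the criterion \eqref{eq75}--\eqref{eq76} is fulfilled on each strip so that it can legitimately be invoked in Step two.
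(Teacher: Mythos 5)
Your proposal matches the paper's own proof in all essentials: the paper likewise evaluates $\int_0^1\int_0^1 F\,\dd C$ along the three singular curves using \eqref{37} to obtain $\int_0^1 G\,\dd y$, appeals to the earlier representation result (which you re-derive strip-by-strip in your Step two via \eqref{eq75}--\eqref{eq76}) to conclude that the global maximizer has the form \eqref{25} and hence lies in $\mathcal{C}$, and deduces \eqref{45} from the fact that $\mathcal{C}$ corresponds to a restricted class of pairs $(h_1,h_2)$. The technical caveats you flag at the end (justifying the infinitesimal computation of $\gamma_C$ and the positivity hypothesis of the criterion) are likewise left implicit in the paper.
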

\begin{proof}
Let $F$ be a function defined on $[0,1]^2$ such that $D_2=\frac{\partial^2F(x,y)}{\partial x\partial y}$
changes its sign as indicated in Fig.\ 4. Then the two-dimensional Riemann-Stieltjes integral of $F$ with respect
to the copula $C$ defined in (\ref{25}) is given as follows
\begin{align}\label{26}
\int_0^1\int_0^1F(x,y)&dC(x,y)=
\int_0^1F_1(h_1(y),y)h_1'(y)\dd y\ +\nonumber\\
&+\int_0^1F_2(x_2-h_2(y)+h_1(y),y)(h_2'(y)-h_1'(y))\dd y\nonumber\\
&+\int_0^1F_3(x_2-h_2(y)+y,y)(1-h_2'(y))\dd y\\
&=\int_0^1 G dy\ .
\end{align}
Since under the assumptions of Theorem \ref{41}., $C$ is indeed a copula, the representation from Theorem 3. implies optimality.
The second statement is obvious, since the class $\mathcal{C}$ is a subset, due to additional restrictions, of candidate functions $h_1$, $h_2$.
\end{proof}
\begin{remark}
Note that to compute extremes of $\int_0^1 G(y,h_1,h_2,h_1',h_2')\dd y$
we can apply calculus of variations (cf \cite[p. 33]{Weinstock}).
In particular, if $(h_1,h_2)$ are extrema for the integral  $\int_0^1G(y,h_1,h_2,h_1',h_2')\dd y$,
then $(h_1,h_2)$ satisfy the Euler-Lagrange differential equations
\begin{align}
\label{42}
&\frac{\partial G}{\partial h_1}-\frac{\dd}{\dd y}\frac{\partial G}{\partial h_1'}=0\ ,\nonumber\\
&\frac{\partial G}{\partial h_2}-\frac{\dd}{\dd y}\frac{\partial G}{\partial h_2'}=0\ .
\end{align}
The solution $(h_1,h_2)$ to \eqref{42} maximizes $\int_0^1G(y,h_1,h_2,h_1',h_2')\dd y$ if
\begin{equation}
\label{50}
\frac{\partial^2  G}{\partial h_1'\partial h_1'}\le0,\quad
\begin{vmatrix}
\frac{\partial^2  G}{\partial h_1'\partial h_1'}
&\frac{\partial^2  G}{\partial h_1'\partial h_2'}\\
\frac{\partial^2  G}{\partial h_2'\partial h_1'}
&\frac{\partial^2  G}{\partial h_2'\partial h_2'}\\
\end{vmatrix}
\le0.
\end{equation}
\end{remark}
\begin{remark}
From the optimal copula with representation \eqref{25} and the properties of $h_1$ and $h_2$ from Theorem \ref{41}.
we can derive the solution of the problem in the vocabulary of optimal couplings as well. Notice that for $x\in[0,x_1)$, $(x,y)$
is mapped to $(h_1(y),y)$. According to Theorem \ref{41}. $h_1$ is monotone increasing and admits an inverse $g_1$. For $x\in[x_1,x_2)$,
we have $(x,y)$ is mapped to $(x_2-(h_2(y)-h_1(y)),y)$, where $x_2-(h_2(y)-h_1(y))$ is monotone decreasing in $y$ with inverse function $g_2$.
Finally, for $x\in[x_2,1]$, we have $(x,y)\mapsto(x_2+y-h_2(y),y)$, with $x_2+y-h_2(y)$ increasing in $y$ and inverse $g_3$. Therefore we can identify the
optimal coupling $(U,\Gamma(U))$ for $U$ uniformly distributed on $[0,1]$ and
\begin{align*}
\Gamma(x)=\left\{\begin{array}{ll}
g_1(x),&x\in[0,x_1),\\
g_2(x),&x\in[x_1,x_2),\\
g_3(x),&x\in[x_2,1].
\end{array}
\right.
\end{align*}
\end{remark}

\section{A piecewise linear cost function}
Let
\begin{equation*}
F(x,y)=
\begin{cases}
F_1(x,y)=\frac{x}{x_1}y&x\in(0,x_1),\\
F_2(x,y)=\frac{x_2-x}{x_2-x_1}y&x\in(x_1,x_2),\\
F_3(x,y)=\frac{x-x_2}{1-x_2}y&x\in(x_2,1),
\end{cases}
\end{equation*}
with $x$-component as shown in Fig. 10.
\begin{center}
\setlength{\unitlength}{0.8cm}
\begin{picture}(6,6)
\small
\thinlines
\put(0,-0.3){$0$}
\put(6,-0.3){$1$}
\put(2,-0.3){$x_1$}
\put(4,-0.3){$x_2$}
\put(0.5,1){$\frac{x}{x_1}$}
\put(2.5,5){$\frac{x_2-x}{x_2-x_1}$}
\put(4.5,1){$\frac{x-x_2}{1-x_2}$}
\put(2.3,-.8){Figure 10.}
\put(0,0){\line(1,0){6}}
\put(6,0){\line(0,1){6}}
\put(6,6){\line(-1,0){6}}
\put(0,6){\line(0,-1){6}}
\put(2,0){\line(0,1){6}}
\put(4,0){\line(0,1){6}}
\put(0,0){\line(1,3){2}}
\put(2,6){\line(1,-3){2}}
\put(4,0){\line(1,3){2}}
\end{picture}
\end{center}
\normalsize
\vskip.4cm
Towards the construction of $G$ from \eqref{29}, we identify
\begin{align*}
&F_1(h_1(y),y)h_1'(y)=\frac{h_1}{x_1}yh_1',\\
&F_2(x_2-h_2(y)+h_1(y),y)(h_2'(y)-h_1'(y))=\frac{h_2-h_1}{x_2-x_1}y(h_2'-h_1'),\\
&F_3(x_2-h_2(y)+y,y)(1-h_2'(y))=\frac{y-h_2}{1-x_2}y(1-h_2'),
\end{align*}
such that $G$ takes the form
\begin{equation}\label{30}
G=\frac{h_1}{x_1}yh_1'+\frac{h_2-h_1}{x_2-x_1}y(h_2'-h_1')+\frac{y-h_2}{1-x_2}y(1-h_2').
\end{equation}
The associated Euler-Lagrange equations are given by
\begin{align}
\label{31}
\frac{\partial G}{\partial h_1}-\frac{\dd}{\dd y}\frac{\partial G}{\partial h_1'}
&=\frac{h_1'}{x_1}y-\frac{h_2'-h_1'}{x_2-x_1}y-\frac{h_1}{x_1}+\frac{h_2-h_1}{x_2-x_1}=0,\\
\frac{\partial G}{\partial h_2}-\frac{\dd}{\dd y}\frac{\partial G}{\partial h_2'}
\label{32}
&=\frac{h_2'-h_1'}{x_2-x_1}y-\frac{1-h_2'}{1-x_2}y-\frac{h_2-h_1}{x_2-x_1}+\frac{y-h_2}{1-x_2}+\frac{y}{1-x_2}=0.
\end{align}
Now, adding \eqref{31} and \eqref{32} and multiplying the sum by $\frac{1-x_2}{y}$ we get
\begin{equation}
\label{33}
h_1'\bigg(\frac{1-x_2}{x_1}\bigg)
+h_2'=\frac{h_1}{y}\bigg(\frac{1-x_2}{x_1}\bigg)+\frac{h_2}{y}-1.
\end{equation}
Multiplication of \eqref{31} with $(\frac{x_2-x_1}{y})$ gives
\begin{equation}
\label{34}
h_1'\bigg(\frac{x_2-x_1}{x_1}+1\bigg)
-h_2'=\frac{h_1}{y}\bigg(\frac{x_2-x_1}{x_1}+1\bigg)-\frac{h_2}{y}.
\end{equation}
Summing up \eqref{33} and \eqref{34} we find
\begin{equation}
\label{35}
h_1'=\frac{h_1}{y}-x_1
\end{equation}
and hence
\begin{equation}
\label{36}
h_2'=\frac{h_2}{y}-x_2.
\end{equation}
The general solution $h(y)$ of the differential equation
\begin{equation}
\label{43}
h'=\frac{h}{y}-x,
\end{equation}
has the form
\begin{equation}
\label{44}
h(y)=c\,y-xy\,\log y.
\end{equation}
From the boundary conditions $h(1)=x$ and $h(0)=0$, we find $h(y)=xy(1-\log y)$ and $h'(y)=x(-\log y)$.
Thus
\begin{equation}
\label{52}
h_1(y)=x_1y(1-\log y), \quad h_2(y)=x_2y(1-\log y),
\end{equation}
which unfortunately do not satisfy condition $(v)$ in Theorem \ref{41}. Finally, $G$ is given through
\begin{equation*}
G=y^2\log y\bigg(\frac{x_2}{1-x_2}\bigg)+y^2(\log y)^2\bigg(\frac{x_2}{1-x_2}\bigg)+y^2
\end{equation*}
which yields the value
\begin{equation}
\int_0^1G\dd y=\frac{x_2}{1-x_2}\bigg(-\frac{1}{27}\bigg)+\frac{1}{3}.
\end{equation}
On the other hand, if $h_1(y)=x_1y$ and $h_2(y)=x_2y$, then $G=y^2$
and $\int_0^1G\dd y=\frac{1}{3}$. Thus \eqref{52} does not maximize $\int_0^1G\dd y$.\\
Note that for $F(x,y)=f(x)y$, with $f(x)$ uniform distribution preserving map (u.d.p.) we have
\begin{align}
&\max_{x_n,y_n\text{ are u.d.}}\lim_{N\to\infty}\frac{1}{N}\sum_{n=1}^N
f(x_n)y_n=\max_{C(x,y)-\text{copula}}\int_0^1\int_0^1F(x,y)d C(x,y),\label{47}\\
&\max_{x_n-\text{u.d}, \Phi-\text{u.d.p.}}\lim_{N\to\infty}\frac{1}{N}\sum_{n=1}^N f(x_n)\Phi(x_n)
=\max_{\Phi-\text{u.d.p.}}\int_0^1 f(x)\Phi(x)\dd x.\label{48}
\end{align}
Then \eqref{48} is in general smaller than \eqref{47}. By \cite[Corollary 3]{FialovaStrauch}
for u.d.p. $f(x)$ we have
\begin{equation}
\max_{\Phi-\text{u.d.p.}}\int_0^1\int_0^1f(x)\Phi(x)\dd x=\frac{1}{3}.
\end{equation}
Therefore in our situation we have $\int_0^1\int_0^1F(x,y)dC(x,y)\ge\frac{1}{3}$.
\begin{remark}
The example points out the deficiencies of the variational formulation in the present context.
When maximizing \eqref{1} it is essential to preserve the uniform distribution property of the marginals.
In the formulation via the function \eqref{29}, which led to the problem from the calculus of variations, this constraint is not present anymore
and the maximization takes place over differentiable $h_1,\,h_2$.
On the other hand the optimal copula $C$ in \eqref{25} with $h_1(y)=C(x_1,y)$ and $h_2(y)=C(x_2,y)$ does not enforce any smoothness properties,
which implies that when solving \eqref{29} one does not necessarily get an upper bound for \eqref{1}. The same reasoning suggests that
when maximizing \eqref{29} over differentiable functions, which fulfill the conditions stated in Theorem \ref{41}, one in general derives
a lower bound for \eqref{1}.
\end{remark}

\section{A different approach using coupling}
In this section we consider the example
\begin{equation*}
F(x,y)=\sin(\pi(x+y))
\end{equation*}
and relate this problem to combinatorial optimization.
In \cite{HoferIaco} an upper bound for
\begin{equation}\label{sine}
\int_{[0,1]^2}\sin(\pi(x+y))\gamma(dx,dy)\ .
\end{equation}
was found by means of the Hungarian Algorithm and we will show that the copula found in \cite{HoferIaco} is indeed the one maximizing \eqref{sine}.

For dealing with this particular example one may utilize Theorem 1. from \cite{Uckelmann}.
But since the proof of what we actually need is not given there, we re-state the following particular version of this
result and give its proof.
\begin{theorem}
Let $\mu,\,\nu$ be the uniform distribution on $[0,1]$ and the cost function $c(x,y)=\phi(x+y)$ with
$\phi:[0,2]\to\mathbb{R}$. In particular we assume that $\phi\in\mathcal{C}^2[0,1]$ and that there is $k\in(0,2)$
such that $\phi''(x)<0$ for $x\in[0,k)$ and $\phi''(x)>0$ for $x\in(k,2]$. If $\beta\in(0,1)$ denotes the solution to
$$\phi(2\beta)-\phi(\beta)=\beta\phi'(\beta),$$
then
\begin{align*}
\Gamma(x)=
\left\{\begin{array}{cc}
\beta-x,& x\in[0,\beta),\\
x,&x\in[\beta,1],
\end{array}\right.
\end{align*}
induces by $(U,\Gamma(U))$ for some standard uniformly distributed $U$ an optimal $c$-coupling between $P$ and $Q$.
\end{theorem}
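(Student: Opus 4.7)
The plan is to invoke Theorem \ref{Th:optimality}: it suffices to exhibit a $c$-convex function $f:[0,1]\to\mathbb{R}$ together with an auxiliary function $a:[0,1]\to\mathbb{R}$ such that the graph $\{(x,\Gamma(x)):x\in[0,1]\}$ is contained in $\partial_c f$. By the characterisation (\ref{eq:ubdiffeasy}), this amounts to verifying the representation $f(x)=\sup_{y\in[0,1]}\{\phi(x+y)+a(y)\}$ together with the fact that the supremum is attained at $y=\Gamma(x)$ for every $x\in[0,1]$.

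The candidate for $a$ is recovered from the first-order optimality condition $\phi'(x+y)+a'(y)=0$ evaluated along the proposed support. On the segment $x\in[\beta,1]$ where $\Gamma(x)=x$, the substitution $y=x$ yields $a'(y)=-\phi'(2y)$ for $y\in[\beta,1]$; on $x\in[0,\beta)$ where $\Gamma(x)=\beta-x$, the substitution $y=\beta-x$ forces $a'(y)=-\phi'(\beta)$ for $y\in(0,\beta]$. Normalising $a(0)=0$ produces
\begin{equation*}
a(y)=\begin{cases}-\phi'(\beta)\,y,& y\in[0,\beta],\\[2pt] -\tfrac12\bigl(\phi(2y)-\phi(2\beta)\bigr)-\phi'(\beta)\,\beta,& y\in[\beta,1],\end{cases}
\end{equation*}
and the corresponding $f(x):=\phi(x+\Gamma(x))+a(\Gamma(x))$ becomes $\phi(\beta)-\phi'(\beta)(\beta-x)$ on $[0,\beta]$ and $\tfrac12\phi(2x)+\tfrac12\phi(2\beta)-\phi'(\beta)\beta$ on $[\beta,1]$. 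Matching the two branches at $x=\beta$ (continuity of both $a$ and $f$) reduces exactly to $\phi(2\beta)-\phi(\beta)=\beta\phi'(\beta)$, which is the defining relation for $\beta$; this is precisely where that equation enters.

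The remaining -- and principal -- step is the global inequality $g_x(y):=\phi(x+y)+a(y)\le f(x)$ for every $y\in[0,1]$. A direct differentiation gives $g_x'(y)=\phi'(x+y)-\phi'(\beta)$ on $(0,\beta)$ and $g_x'(y)=\phi'(x+y)-\phi'(2y)$ on $(\beta,1)$, so the candidate stationary points are $y=\beta-x$ in the first regime and $y=x$ in the second. Turning these local statements into a genuine global maximum is where the sign hypothesis on $\phi''$ is essential: on $[0,k)$ the concavity of $\phi$ gives the monotonicity of $\phi'$ that controls $g_x$ on $(0,\beta)$, while on $(k,2]$ the convexity of $\phi$ controls the sign of $\phi'(x+y)-\phi'(2y)$ through the comparison $2y-(x+y)=y-x$, forcing the stationary point $y=x$ to be a maximum in the second regime. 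The delicate point is to rule out that $g_x$ jumps to a larger value on the opposite segment; here the defining relation for $\beta$ is invoked a second time, since a direct computation shows $g_x(\beta-x)=g_x(x)$ precisely at $x=\beta$, and the monotonicity deduced from the sign of $\phi''$ propagates this equality into the desired inequality elsewhere. Once these inequalities are established, $f$ is $c$-convex by its own supremum representation, $\Gamma(x)\in\partial_c f(x)$ for every $x\in[0,1]$, and Theorem \ref{Th:optimality} immediately yields the optimality of the coupling $(U,\Gamma(U))$.
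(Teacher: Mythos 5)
Your setup is the same as the paper's: you build the same dual potentials (your $f$ and $a$ agree with the paper's $f_1,f_2$ and the constants in $\psi^1,\psi^2$ up to one additive constant), you use the defining relation for $\beta$ exactly where the paper does (to glue the two branches), and you invoke Theorem \ref{Th:optimality} in the same way. The difference between verifying $\sup_y\{c(x,y)+a(y)\}=f(x)$ attained at $y=\Gamma(x)$ and verifying $c(\xi,\Gamma(x))+a(\Gamma(x))\le f(\xi)$ for all $\xi$ is only a relabeling. So the framework is fine.

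The gap is in what you call the principal step. Your argument that the local stationary points are global maxima rests on a monotonicity claim that is false in the form you use it. On $(0,\beta)$ you have $g_x'(y)=\phi'(x+y)-\phi'(\beta)$ and you assert the only stationary point is $y=\beta-x$; but $\phi'$ is \emph{not} monotone on $[0,2\beta]$ (it decreases on $[0,k)$ and increases on $(k,2]$ with $k<2\beta$), so $\phi'(x+y)=\phi'(\beta)$ has a second root $x+y=\tau$, where $\tau\in(k,2\beta)$ is defined by $\phi'(\tau)=\phi'(\beta)$. Hence $g_x$ increases up to $y=\beta-x$, decreases to a local minimum at $y=\tau-x$, and then increases again toward $y=\beta$ and beyond into $[\beta,1]$; it is not unimodal, and the competitor on the opposite segment is a genuine threat, not something that monotonicity rules out. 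The paper's proof spends most of its length on exactly this: it first establishes the ordering $\beta<k<\tau<2\beta$ (via the mean value theorem applied to the defining relation for $\beta$), and then in the two cross-regime cases it studies the two-variable functions $F_2(x,\xi)=\phi(\xi+\beta)-x\phi'(\beta)-\phi(\beta+\xi-x)$ and $F_3(x,\xi)=\phi(x+\beta)-\phi(x+\xi)-(\beta-\xi)\phi'(\beta)$, locates their interior critical points ($\hat x=2\beta-\tau$, respectively $\hat\xi=\tau-\beta$), shows positivity there by convexity on $(k,2]$, and separately checks the relevant partial derivatives with a case split on whether the argument of $\phi'$ lies above or below $k$. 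Your sentence ``the monotonicity deduced from the sign of $\phi''$ propagates this equality into the desired inequality elsewhere'' is a placeholder for all of this; as written it does not constitute a proof, and the single equality $g_\beta(0)=g_\beta(\beta)$ you extract from the definition of $\beta$ is not enough to control $g_x$ for $x\neq\beta$. You would need to introduce $\tau$, prove $\beta<k<\tau<2\beta$, and carry out the boundary-plus-interior critical point analysis of the cross terms to close the argument.
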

\begin{proof}
For the proof we proceed as proposed in \cite{Uckelmann} and \cite{RueschUckel}.
Define the following functions:
\begin{align*}
f_1(x)&=x\phi'(\beta),\\
f_2(x)&=\frac{1}{2}(\phi(2 x)-\phi(2\beta))+\beta \phi'(\beta),\\
\psi^1(\xi)&=\phi(\beta-x+\xi)+x\phi'(\beta)-\phi(\beta),\\
\psi^2(\xi)&=\phi(x+\xi)-\frac{1}{2}\phi(2x)-\frac{1}{2}\phi(2\beta)+\beta\phi'(\beta).
\end{align*}
Furthermore set
$$f(x)=f_1(x)I_{[0,\beta)}(x)+f_2(x) I_{[\beta,1]}(x),$$
and put for $\xi\in[0,1]$:
\begin{align*}
\psi_{\Gamma(x)}(\xi)=
\left\{\begin{array}{cc}
\psi^1(\xi),& x\in[0,\beta),\\
\psi^2(\xi),& x\in[\beta,1].
\end{array}\right.
\end{align*}
Here $\psi_{\Gamma(x)}(\xi)$ plays the role of $\psi_{y,a}(\xi)=c(\xi,y)+a(y)$ with $y=\Gamma(x)$ in (\ref{eq:ubdiffeasy}).
Now the idea, following Theorem \ref{Th:optimality} and (\ref{eq:ubdiffeasy}),
is to show that $y=\Gamma(x)$ is in the $c$-subdifferential of $f(x)$ for all $x\in[0,1]$ which implies optimality
of this particular coupling and optimality of the distribution induced by $(U,\Gamma(U))$ for the transport problem.
For the $c$-convexity of $f$ and the subdifferential property we need to show:
\begin{align*}
\psi_{\Gamma(x)}(x)&=f(x)\quad\forall\,x\in[0,1],\\
\psi_{\Gamma(x)}(\xi)&\leq f(\xi)\quad\forall\,\xi\in[0,1].
\end{align*}
We start with showing that $\psi_{\Gamma(x)}(x)=f(x)$. For $x\in[0,\beta)$ we have that $\Gamma(x)=\beta-x$ and
$$\psi_{\Gamma(x)}(x)=\psi^1(x)=x\phi'(\beta)=f_1(x)=f(x).$$
For $x\in[\beta,1]$ we have $\Gamma(x)=x$ and
$$\psi_{\Gamma(x)}(x)=\psi^2(x)=\frac{1}{2}(\phi(2 x)-\phi(2\beta))+\beta\phi'(\beta)=f_2(x)=f(x).$$
It remains to show $\psi_{\Gamma(x)}(\xi)\leq f(\xi)$ for all $(x,\xi)\in[0,1]\times [0,1]$.\\
At first we need some details on the location of $\beta$. From the definition of $\beta$ and the
mean value theorem we obtain $\exists \tau\in(\beta,2\beta)$ with $\phi'(\tau)=\phi'(\beta)$.
Since $\phi$ is concave on $[0,k)$ and convex on $(k,2]$ we see that $\beta<k<\tau<2\beta$.\\
In the following we distinguish four situations.
\begin{itemize}
\item For $x\in[0,\beta)$ and $\xi\in[0,\beta)$ the functions are given by
\begin{align*}
\psi_{\Gamma(x)}(\xi)=\phi(\beta-x+\xi)+x\phi'(\beta)-\phi(\beta),\quad f(\xi)=\xi\phi'(\beta).
\end{align*}
Therefore we need to check:
\begin{align}\label{eq:1st}
f(\xi)-\psi_{\Gamma(x)}(\xi)=(\xi-x)\phi'(\beta)-(\phi(\beta+\xi-x)-\phi(\beta))=:F_1(x,\xi).
\end{align}
Obviously $F_1(x,\xi)=0$ for $(0,\beta)$, $(0,0)$ and $(x,x)$. In general we have
by concavity on $[0,k)$ and the definition of $\beta$, $\phi(2\beta)=\phi(\beta)+\beta\phi'(\beta)$
\begin{align*}
\phi(\beta)+(\beta+\xi-x-\beta)\phi'(\beta)\geq\phi(\beta+\xi-x),
\end{align*}
since $\beta+\xi-x\in[0,2\beta]$, which proves that (\ref{eq:1st}) is positive.
\item For $x\in[0,\beta)$ and $\xi\in[\beta,1]$ the functions are given by
\begin{align*}
&\psi_{\Gamma(x)}(\xi)=\phi(\beta-x+\xi)+x\phi'(\beta)-\phi(\beta)\ ,\\
&f(\xi)=\frac{1}{2}(\phi(2\xi)-\phi(2\beta))+\beta\phi'(\beta).
\end{align*}
We need the following to be positive:
\begin{align*}
&\frac{1}{2}(\phi(2\xi)-\phi(2\beta))+\beta\phi'(\beta)-\phi(\beta+\xi-x)-x\phi'(\beta)+\phi(\beta)\\
&=\frac{1}{2}\phi(2\xi)+\frac{1}{2}\phi(2\beta)-x\phi'(\beta)-\phi(\beta+\xi-x)\\
&\geq \phi(\xi+\beta)-x\phi'(\beta)-\phi(\beta+\xi-x)=:F_2(x,\xi),
\end{align*}
where the equality above follows from the definition of $\beta$ and the inequality follows from convexity since $k<2\beta\leq 2\xi$.
Observe $F_2(x,\xi)=0$ for $(0,\xi)$ and $(\beta,\beta)$. To prove positivity the strategy is as follows, firstly show
$F_2(x,\xi=\beta)\geq0$ for all $x\in[0,\beta)$ and secondly show that $\frac{\partial}{\partial\xi}F_2(x,\xi)\geq0$
for all $(x,\xi)\in[0,\beta)\times(\beta,1)$.\\
Look at
\begin{align*}
\frac{\partial}{\partial x}F_2(x,\xi=\beta)=-\phi'(\beta)+\phi'(2\beta-x)
\end{align*}
which is zero in $(0,\beta)$ exactly if $x=\hat{x}=2\beta-\tau<\beta$.
Since $0<\phi''(\tau)=-\frac{\partial^2}{\partial x^2}F_2(x,\xi=\beta)\vert_{x=\hat{x}}$ we have that
$$F_2(\hat{x},\xi=\beta)=\phi(2\beta)-(2\beta-\tau)\phi'(\tau)-\phi(\tau)>0$$ is a maximum, it is positive by convexity ($k<\tau<2\beta$).
Thus, $F_2(x,\xi=\beta)\geq 0$ for $x\in[0,\beta)$.
Now we can deal with checking the interior,
\begin{align*}
\frac{\partial}{\partial \xi}F_2(x,\xi)=\phi'(\beta+\xi)-\phi'(\beta+\xi-x).
\end{align*}
Suppose $\beta+\xi-x\geq k$, then by convexity $\phi'(\beta+\xi)\geq\phi'(\beta+\xi-x)$. On the other hand if
$\beta+\xi-x< k$ we have $\phi'(\beta+\xi-x)<\phi'(\beta)=\phi'(\tau)\leq\phi'(2\beta)\leq\phi'(\beta+\xi)$ since $2\beta\leq\beta+\xi$.
\item Consider $x\in[\beta,1]$ and $\xi\in[0,\beta)$, here
\begin{align*}
\psi_{\Gamma(x)}(\xi)=\phi(x+\xi)-\frac{1}{2}\phi(2 x)-\frac{1}{2}\phi(2\beta)+\beta\phi'(\beta),\quad f(\xi)=f_1(\xi)=\xi\phi'(\beta),
\end{align*}
and we need
\begin{align*}
&\xi\phi'(\beta)-\phi(x+\xi)+\frac{1}{2}\phi(2 x)+\frac{1}{2}\phi(2\beta)-\beta\phi'(\beta)\\
&\geq \phi(x+\beta)-\phi(x+\xi)-(\beta-\xi)\phi'(\beta)=:F_3(x,\xi)
\end{align*}
to be positive, the inequality stems from convexity since $2x\geq 2\beta>k$.
We proceed as before. $F_3(x,\xi)=0$ for $(x,\beta)$ and $(\beta,0)$. At first fix $x=\beta$,
$F_3(x=\beta,y)=0$ if $y=0$ and $y=\beta$. In between we study
\begin{align*}
\frac{\partial}{\partial\xi}F_3(x=\beta,\xi)=-\phi'(\beta+\xi)+\phi'(\beta),
\end{align*}
which is zero in $(0,\beta)$ exactly if $\xi=\hat{\xi}=\tau-\beta$.
Again due to convexity of $\phi$ on $(k,2]$ we have a maximum in $\hat{\xi}$ and
$$F_3(x=\beta,\hat{\xi})=\phi(2\beta)-\phi(\tau)-(2\beta-\tau)\phi'(\tau)>0,$$
such that we have $F_3(x=\beta,\xi)\geq 0$ for $\xi\in[0,\beta)$. On the interior we show that
\begin{align*}
\frac{\partial}{\partial x} F_3(x,\xi)=\phi'(\beta+x)-\phi'(x+\xi)\geq 0.
\end{align*}
If $x+\xi\geq k$ we have from convexity $\phi'(\beta+x)\geq \phi'(x+\xi)$.
If $x+\xi < k$ we have $\beta\leq x+\xi<k\leq \tau<2\beta\leq\beta+x$ and therefore
$$\phi'(x+\xi)\leq\phi'(\beta)=\phi'(\tau)\leq \phi'(2\beta)\leq \phi'(\beta+x).$$
\item Let $x\in[\beta,1]$ and $\xi\in[\beta,1]$. Here
\begin{align*}
&\psi_{\Gamma(x)}(\xi)=\phi(x+\xi)-\frac{1}{2}\phi(2 x)-\frac{1}{2}\phi(2\beta)+\beta\phi'(\beta)\ ,\\
& f(\xi)=\frac{1}{2}
(\phi(2\xi)-\phi(2\beta))+\beta\phi'(\beta),
\end{align*}
such that
\begin{align*}
f(\xi)-\psi_{\Gamma(x)}(\xi)=\frac{1}{2}\phi(2x)+\frac{1}{2}\phi(2\xi)-\phi(x+\xi)\geq0,
\end{align*}
is fulfilled by convexity since $x+\xi,\,2x,\,2\xi\geq 2\beta>k$.
\end{itemize}
We can conclude that $\psi_{\Gamma(x)}(\xi)\leq f(\xi)\quad\forall\,(x,\xi)\in[0,1]\times[0,1]$, which according
to Theorem \ref{Th:optimality}
shows that the vector $(U,\Gamma(U))$ for $U\stackrel{d}{\sim}\mathcal{U}([0,1])$ yields an optimal coupling.
\end{proof}
\begin{remark}
If $\beta>1$ then it can be shown as in the first step of the above proof that $(U,1-U)$ yields the optimal coupling.
Loosely speaking one could say that the concave behaviour dominates the convex one.
\end{remark}
Now we are prepared to answer the sine question. Setting $\phi(z)=\sin(\pi z)$ and $k=1$ we immediately get:
\begin{corollary}
For $c(x,y)=\sin(\pi(x+y))$ we have that the distribution of the vector $(U,\Gamma(U))$ for $U\sim\mathcal{U}([0,1])$ with
\begin{align*}
\Gamma(x)=\left\{\begin{array}{cc}
\beta-x,& x\in[0,\beta),\\
x,&x\in[\beta,1],
\end{array}\right.
\end{align*}
and $\beta=0.7541996008265638\approx0.7542$ (cf. \cite{HoferIaco}) which solves
\begin{align}\label{eq:1storder}
\sin(2\pi\beta)-\sin(\pi\beta)=\beta\pi\cos(\pi\beta),
\end{align}
is maximizing
$$\int_{[0,1]^2} \sin(\pi(x+y)) dC(x,y)$$
in the set of all bivariate distributions $C$ with uniform marginals, i.e., in the set of all copulas.
\end{corollary}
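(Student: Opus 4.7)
The plan is a direct application of the preceding Theorem to the specialization $\phi(z)=\sin(\pi z)$. First I would verify the structural hypotheses of that theorem. The function $\phi$ is trivially of class $C^2$ on $[0,2]$, and differentiating twice gives $\phi''(z)=-\pi^2\sin(\pi z)$, which is strictly negative on $(0,1)$ and strictly positive on $(1,2)$. Hence $\phi$ is concave on $[0,1)$ and convex on $(1,2]$, and the inflection point $k=1\in(0,2)$ plays the role required by the theorem.

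Next I would show that the first-order condition $\phi(2\beta)-\phi(\beta)=\beta\phi'(\beta)$, which in our setting is exactly equation \eqref{eq:1storder}, admits a solution $\beta\in(0,1)$. Setting $g(\beta):=\sin(2\pi\beta)-\sin(\pi\beta)-\beta\pi\cos(\pi\beta)$, a direct evaluation yields $g(1/2)=-1<0$ and $g(1)=\pi>0$, so the intermediate value theorem delivers a root in $(1/2,1)$; its numerical value $\beta\approx 0.7542$ is the one recorded in \cite{HoferIaco}. In particular $\beta<1$, so the regime of the Remark following the Theorem, in which $\beta>1$ would force the antimonotonic coupling $(U,1-U)$, does not intervene here.

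With the two inputs just assembled, the Theorem immediately yields that $(U,\Gamma(U))$ with $U$ uniformly distributed on $[0,1]$ and $\Gamma$ as described is an optimal $c$-coupling between the two uniform marginals for the cost $c(x,y)=\sin(\pi(x+y))$. Via the one-to-one correspondence between copulas and doubly stochastic measures recalled in the introduction, an optimal coupling is precisely a maximizer of $\int_{[0,1]^2}\sin(\pi(x+y))\,dC(x,y)$ over the set of copulas, which is the content of the corollary. The only non-routine step is the existence of $\beta\in(0,1)$, and even that reduces to a single application of the intermediate value theorem; the substantive $c$-convexity work has already been discharged in the proof of the Theorem.
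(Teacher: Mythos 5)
Your proposal is correct and takes essentially the same route as the paper, whose entire proof of this corollary is the single remark ``Setting $\phi(z)=\sin(\pi z)$ and $k=1$ we immediately get'' the statement from the preceding theorem. Your explicit verification of the concavity/convexity split at $k=1$ and the intermediate-value argument locating $\beta\in(1/2,1)$ merely spell out details the paper leaves implicit.
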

\begin{remark}
In this situation equation (\ref{eq:1storder}) meets the first order condition when looking at couplings of the form
$(U,\Gamma^{\alpha}(U))$ with
\begin{align*}
\Gamma^{\alpha}(x)=\left\{
\begin{array}{cc}
\alpha-x,& x\in[0,\alpha),\\
x,& x\in[\alpha,1],
\end{array}\right.
\end{align*}
or explicitly maximizing ($c(x,y)=\sin(\pi(x+y)$)
\begin{align*}
H(\alpha):=\int_0^{\alpha }c(x,\alpha -x) \, dx+\int_{\alpha }^1 c(x,x) \, dx.
\end{align*}
This is conjectured in \cite{HoferIaco} but with inaccurate $\alpha=\frac{3}{4}$.
\end{remark}

\section*{Acknowledgments}
M.R. Iac\`o and R.F. Tichy would like to thank the Slovak colleagues for their hospitality during the visit in Bratislava in November 2014.
{\footnotesize
\bibliographystyle{abbrv}
\bibliography{strauch}}

\end{document}